\newcommand{\R}[1]{\mathbb{R}^{#1}}
\definecolor{violet}{rgb}{0.7,0,0.6}
\newcommand{\E}{\mathcal{E}}
\newtheorem{proposition}{{\sc\bf Proposition}}
\newtheorem{theorem}{{\sc\bf Theorem}}
\newtheorem{remark}{{\sc\bf Remark}}
\newtheorem{assumption}{{\sc\bf Assumption}}
\def\P{{\mathbb P}}
\def\E{{\mathbb E}}
\def\R{{\mathbb R}}
\def\P{\mathbb{P}}
\def\cals_+{{\cals_+}}
\def\calb{{\mathcal{B}}}
\def\calh{{\mathcal{H}}}
\def\call{{\mathcal{L}}}
\def\cals{{\mathcal{S}}}
\newcommand{\var}{{\rm Var}}
\newcommand{\dd}{\mathrm{d}}
\newcommand{\wh}{\widehat}
\newcommand{\operK}{\mathcal{K}}
\definecolor{violet}{rgb}{0.7,0.2,0.6}
\begin{document}

\begin{center}
	\Large \bf On functional logistic regression: some conceptual issues
\end{center}
 \normalsize
 \begin{center}
 	Jos\'e R. Berrendero$^1$\hspace{.1cm} Beatriz Bueno-Larraz$^2$ and \hspace{.1cm} Antonio Cuevas$^1$ \\
 	$^1$ Departamento de Matemáticas, Universidad Autónoma de Madrid\\
 	$^2$ Independent Data Scientist
 \end{center}

\

\begin{abstract} 
\footnotesize {The main ideas behind the classical multivariate logistic regression model make sense when  
 	translated to the functional setting, where the explanatory variable $X$ is a function and the response $Y$ is binary. However, some important technical issues appear (or are aggravated with respect to those of the multivariate case) due to the functional nature of the explanatory variable. First, the mere definition of the model can be questioned: while most approaches so far proposed rely on the $L_2$-based model, we suggest an alternative (in some sense, more general) approach, based on the theory of Reproducing Kernel Hilbert Spaces (RKHS). The validity conditions of such RKHS-based model, as well as its relation with the $L_2$-based one are investigated and made explicit in two formal results.  Some relevant particular cases are considered as well. Second we show that, under very general conditions, the maximum likelihood (ML) of the logistic model parameters fail to exist in the functional case. Third, on a more positive side, we suggest an RKHS-based restricted version of the ML estimator.  This is a methodological paper, aimed at a better understanding of the functional logistic model, rather than focussing on numerical and practical issues.}
\end{abstract}

\small \noindent {\bf Keywords:}  Functional data, logistic regression, reproducing kernel Hilbert spaces,  kernel methods in statistics.

\normalsize

\vspace*{5mm}

\section{Introduction: statement of the model}\label{sec:int}

\noindent \textit{Logistic regression: some basic ideas and references} 

\

Throughout this work we study the situation in which a binary (0-1) response variable
must be predicted  in terms of a random explanatory variable $X$, defined on a probability space $\Omega$. The logistic regression model is an extremely popular approach to such problem. The basic ideas of this model date back to the end of nineteenth century (a complete historical overview can be found in \citet[Ch. 9]{cramer2003}) but the logistic methodology is still under constant attention, especially as a (supervised) classification tool. The book by  \cite{hilbe2009} is a fairly complete reference about logistic regression.

The logistic model is a particular case of the wider family of generalized linear models (we refer to \cite{mccullagh1989} for details) which presents some interesting characteristics. According to \citet[p. 52]{hosmer2013}, one of its most appealing features is that the coefficients of the model are easily interpretable in terms of the values of the predictors. This technique stems from the attempt to apply well-known linear regression procedures to problems with categorical responses, like binary classification. There is no point in imposing that the categorical response is linear in the predictors $x$,   but we might instead assume  that $\log(p(x)/(1-p(x))$ is linear in $x$, where $p(x)={\mathbb P}(Y=1|X=x)$.  In this quotient the logarithm could be replaced with other link functions. However, an important aspect of  the logarithm-based  model is that it holds whenever the  predictor variable $X$ in both classes  is Gaussian with a common covariance matrix.

This finite-dimensional logistic model has been widely studied. Apart from the already mentioned references, \cite{efron1975} provides a comparison between logistic predictors and Fisher discriminant analysis. In addition, \cite{munsiwamy2011} gives a useful overview of asymptotic results of the estimators (firstly proved in \cite{fahrmeir1985} and \cite{fahrmeir1986}).

\

\noindent \textit{Logistic regression in the functional case: the ``classical'' $L^2$-model} 

\

The motivations for extending logistic regression to functional data are quite obvious given the current increasing availability of functional data in experimental sciences.  
An historical overview of several approaches to functional logistic regression can be found in \cite{mousavi2018}. 

We start by establishing the framework of the problem in this functional context. The goal is to explore the relationship between a dichotomous response variable $Y$,  taking values on $\{0,1\}$, and a functional predictor $X$. We will assume throughout that $X$ is an $L^2$-stochastic process with trajectories in $L^2[0,1]$. Thus, the random variable $Y$ conditional to the realizations $x$ of the process  follows a Bernoulli distribution with parameter $p(x)$ and the prior probability of class 1 is denoted by $p=\mathbb{P}(Y=1)$. In this setting, the most common functional logistic regression (FLR) model is
\begin{equation}\label{Eq:logitL2}
\P(Y=1 | X=x) = \frac{1}{1+\exp\{-\beta_0 - \langle \beta, x \rangle_2\}},
\end{equation}
where $\beta_0\in\R$, $\beta\in L^2[0,1]$ and $\langle \cdot, \cdot\rangle_2$ denotes the inner product in $L^2[0,1]$. This model is the direct extension of the $d$-dimensional one, where the product in $\R^d$ is replaced by its functional counterpart.

The standard approach to this problem is to reduce the dimension of the curves using Principal Components Analysis (PCA). That is, the curves are projected into the subspace defined by the eigenfunctions corresponding to the $d$  largest eigenvalues of the covariance operator. Then, we replace every curve in the functional data sample with the $d$-dimensional coordinates of the projections with respect to the basis formed by the eigenfunctions. 
Finally, standard  logistic regression is applied to the resulting $d$-dimensional vectors. Among others, this strategy has been explored by \cite{escabias2004} and \cite{james2002} from an applied perspective  though, in fact, the latter reference deals with generalized linear models   (and not only with logistic regression). These more general models are also studied by \cite{muller2005}, but with a more mathematical focus. 

\newpage

\noindent  \textit{Some preliminaries and notation for an alternative approach}

\

Our functional data will be trajectories in $L^2[0,1]$ of an $L^2$-process $X=X(t)$ with continuous covariance and mean function, denoted by  $K=K(s,t)$ and $m=m(t)$, respectively. The covariance operator $\operK$ associated with the covariance function $K$ of the process is given by
\begin{equation}\label{Eq:operK}
\operK (f)(\cdot) = \int_0^1 K(s,\cdot)f(s)\dd s = \E \big[ \langle X-m, f \rangle_2 \big( X(\cdot)-m(\cdot) \big) \big].
\end{equation}

 Since the approach we will explore here for functional logistic regression is based on the theory of Reproducing Kernel Hilbert Spaces (RKHS's), we will briefly remind here, for the sake of clarity, some basic ideas and notations about RKHS's; see \cite{berlinet2004} and Appendix F of \cite{janson1997} for further details and references).

Let $\calh_0(K):=\{f\in L^2[0,1] \ : \ f(\cdot)=\sum_{i=1}^n a_i K(t_i,\cdot),\ a_i\in{\mathbb R},\ t_i\in[0,1],\ n\in{\mathbb N}\}$, be 
the space of all finite linear combinations of evaluations of $K$.  This space is endowed with the inner product
$\langle f,g\rangle_K=\sum_{i,j}\alpha_i\beta_j K(t_i,s_j)$,
where $f(\cdot)=\sum_i\alpha_i K(t_i,\cdot)$ and $g(\cdot)=\sum_j\beta_j K(s_j,\cdot)$.

Then, the RKHS associated with $K$ is defined as the completion of $\calh_0(K)$. In other words, $\calh(K)$ is made of all functions obtained as pointwise limits of Cauchy sequences in $\calh_0(K)$. The inner product is extended accordingly to the whole space $\calh(K)$. 

These spaces are named after the so-called \textit{reproducing property}, 
$\langle f, K(s,\cdot)\rangle_K = f(s)$, for all  $f\in\calh(K), s\in[0,1]$, which is particularly important in the applications. On account of this property it is sometimes said that the RKHS are spaces of ``true functions'', in the sense that the pointwise values $f(s)$, at a given $s$ do matter, by contrast with $L^2[0,1]$ whose elements are in fact equivalence classes of functions. 

A property of RKHS's especially useful in statistical applications is given by the following isometry result:  let $L^2(\Omega) $ be the Hilbert space of real random variables with finite second moment, endowed with the usual inner product and with associated norm $\| U\|^2={\mathbb E}(U^2)$. Define
$$\call_0(X) = \big\{U\in L^2(\Omega) \ : \ U=\sum_{i=1}^n a_i \big(X(t_i)-m(t_i)\big),\ a_i\in{\mathbb R},\ t_i\in[0,1],\ n\in{\mathbb N}\big\},$$
where $m(t)=\E[X(t)]$, and let $\call(X)$ be the completion of $\call_0(X)$ in  $L^2(\Omega)$; hence, in other words, is a subspace of $L^2(\Omega)$ defined as the closure of the linear span of the centred one-dimensional marginals of the process $X$. It turns out that the transformation $\Psi_X$, from  $\call(X)$ to $\calh(K)$, defined by 
\begin{equation}\label{eq:isometry} 
\Psi_X(U)(s) = \E[ U(X(s)-m(s))] = \langle U, X(s)-m(s) \rangle \in \calh(K), \ \text{ for } U\in \call(X)
\end{equation} 
is an isometry (sometimes called \textit{Lo\`eve's isometry}) between $\call(X)$ and $\calh(K)$, that is, $\Psi_X(U)$ is bijective and preserves the inner product (see \citet[Lemma 1.1]{lukic2001}). As a consequence, the Hilbert spaces $\call(X)$ and $\calh(K)$ can be identified. 
Note that, in informal terms $\Psi_X$ is the completion of the transformation from $\call_0(X)$ to $\calh_0(K)$ given by $\sum_{i=1}^n a_i \big(X(t_i)-m(t_i)\big)\mapsto \sum_{i=1}^n a_i K(t_i,\cdot)$.

 It is worth mentioning that while $\calh(K)$ is, in several aspects, a natural Hilbert space associated with the process $X$, typically the trajectories of the process $X$ themselves do not belong to $\calh(K)$  with probability one (see, e.g., \cite[Cor. 7.1]{lukic2001}, \citep[Th. 11]{pillai2007}). Then, one cannot directly write $\langle x, K(s,\cdot)\rangle_K$, for $x$ a realization of the process. However, following \cite{parzen1961}, we will use the convenient notation 
 $\langle x, K(s,\cdot)\rangle_K$ interpreting this expression in terms of Lo\`eve's isometry, $\Psi_X$; more precisely, we will identify $\langle x, f\rangle_K$ with $\Psi_x^{-1}(f):= (\Psi_X^{-1}(f))(\omega)$, for $x=X(\omega)$ and $f\in \calh(K)$,  which in particular means $\Psi_X^{-1}(\sum_{i=1}^n a_i K(t_i,\cdot))=\sum_i a_i(X(t_i)-m(t_i))$. 
 
 The intuition behind the definition of $\langle x, f\rangle_K$  is reminiscent of the definition of It\^o's isometry,  which is used to define  the stochastic integral with respect to the Wiener measure (Brownian motion), overcoming the fact that the Brownian trajectories are not of bounded variation.  As we will see below, the transformation $x\mapsto \langle \beta,x\rangle_K=\Psi_X^{-1}(\beta)$ will play a central role in the alternative functional logistic model we are going to propose.

\

\noindent \textit{An RKHS-based proposal for logistic regression in the functional case} 

\

 We propose a new model for functional logistic regression problems, based on ideas borrowed from the theory of RKHS's. To be more specific, our proposal is to study the following model, instead of \eqref{Eq:logitL2},
\begin{equation}\label{eq:logistic}
\mathbb{P}(Y=1\, |\, X=x) \ = \ \frac{1}{1 + \exp\left\{-\beta_0 - \langle \beta,x \rangle_K \right\}},
\end{equation}
where the inner product stands for $\Psi_x^{-1}(\beta)$, the inverse of Lo\`eve's isometry defined in Equation \eqref{eq:isometry}. Throughout this paper we motivate this model and study some relevant theoretical aspects about it. 

\

\noindent \textit{Some RKHS related literature}

\

The book by \cite{hsing2015} provides an excellent mathematical background on mathematical methods, including RKHS theory, for the statistical analysis of functional data.
The papers by \cite{hsing2009} and 
\cite{kneip2020optimal} offer also very general perspectives and results on the applicability of RKHS methods in functional regression models, though not particularly focussed on the logistic case.

Some closely related ideas, aimed at the prediction problem in functional linear models are also present in \cite{shin2012linear}, even if the RKHS methodology is not explicitly mentioned there. 

Some other more specific references (a few of them especially dealing with the functional logistic model) will be cited below.

\newpage

\noindent \textit{The contents of this work}

\

In the first place (see Theorem \ref{Teo:LogModel}, Section \ref{sec:flrmodel}), we will analyze specific conditions under which the RKHS-based logistic model 
\eqref{eq:logistic} holds.

In the second place (see Theorem \ref{th:particular}, Section \ref{sec:particular}), we will show that model \eqref{eq:logistic}  covers some relevant cases of practical interest not included in the standard $L_2$-model \eqref{Eq:logitL2}, though, in fact, it is also shown in Theorem \ref{th:particular} that the $L_2$ model can also be obtained as a particular case of the RKHS model \eqref{eq:logistic} under some conditions. 

In the third place, in Section \ref{sec:MLE}, we will prove two results of non-existence for the maximum likelihood estimator of the slope function $\beta$ in model \eqref{eq:logistic}. Such negative results can be seen as an aggravated, functional counterpart of the well-known partial non-existence results arising in finite dimensional logistic  models; see \cite{candes2020phase} and references therein. 

This is a methodological and theoretical paper: our aim is to contribute to a better understanding of the functional logistic model, rather than focussing on numerical or practical issues. However, we provide in Section \ref{sec:exPr} an specific suggestion (based on a restricted maximum likelihod approach) to deal with the estimation of the slope function in model \ref{eq:logistic}.

\section{The RKHS-based functional logistic model: validity conditions in the Gaussian case}\label{sec:flrmodel}

In this section we motivate the reasons why model \eqref{eq:logistic} is meaningful.  In Theorem \ref{Teo:LogModel} we show that the standard assumption that both $X|Y=0$ and $X|Y=1$ are Gaussian implies \eqref{eq:logistic}.  We also analyze under which conditions  the more standard $L^2$-model  \eqref{Eq:logitL2} is implied and we clarify the difference between both approaches.


In  our   functional setting, for $i=0,1$, we assume that $\{X(t):\, t\in[0,1]\}$ given $Y=i$ is a Gaussian process with  continuous mean function $m_i$ and continuous covariance function $K$ (the same for $i=0,1$). We will assume throughout that all the eigenvalues $\lambda_i$ of the covariance operator ${\mathcal K}$, associated with $K$ are strictly positive (so ${\mathcal K}$ is injective). Note that, as a consequence of Spectral Theorem (see, e.g., \cite[p. 98]{hsing2015}) ${\mathcal K}x=\sum_j\lambda_i\langle x,e_i\rangle e_i)$, where $e_i$ stands for a unit eigenvector associated with $\lambda_i$; thus, the inverse ${\mathcal K}^{-1}$ is defined on the range of ${\mathcal K}$, ${\mathcal K}(L^2)$, as a linear (not continuous) transformation, by ${\mathcal K}^{-1}y=\sum_i\frac{\langle y,e_i\rangle}{\lambda_i}e_i)$, for $y=\sum_i\langle y,e_i\rangle e_i\in {\mathcal K}(L^2)$. 

Let $P_{m_0}$ and $P_{m_1}$ be the  probability measures (i.e., the distributions) induced  by the process $X$ conditional to $Y=0$ and $Y=1$ respectively. Recall that when $m_0$ and $m_1$ both belong to $\calh(K)$, we have that $P_{m_0}$ and $P_{m_1}$ are mutually absolutely continuous; see Theorem 5A of \cite{parzen1961}. The following theorem provides a very natural motivation for the RKHS model \eqref{eq:logistic} in this Gaussian setting.

\begin{theorem}\label{Teo:LogModel}
Let $P_{m_0}, P_{m_1}$  and ${\mathcal K}$  be as in the previous lines. Then,
\begin{itemize}
\item[(a)] if $m_1-m_0\in\mathcal{H}(K)$, then $P_{m_0}$ and $P_{m_1}$ are mutually absolutely continuous and  model \eqref{eq:logistic} holds, 
$$\mathbb{P}(Y=1\, |\, X=x) \ = \ \frac{1}{1 + \exp\left\{-\beta_0 - \langle x,\beta \rangle_K \right\}} \ \equiv \ \frac{1}{1 + \exp\left\{-\beta_0 - \Psi_x^{-1}( \beta) \right\}},$$
with $\beta := m_1-m_0$ and 
$\beta_0 := - \mathbb{E}_{m_1}[\Psi_x^{-1}( \beta)] + \|m_1-m_0\|^2_{K}/2-\log((1-p)/p)$ (where $p=\mathbb{P}(Y=1)$ and 
$\mathbb{E}_{m_1}(\cdot)$ stands for the expectation when the  process has mean function equal to $m_1$). If $m_1-m_0\notin\mathcal{H}(K)$, then $P_{m_0}$ and $P_{m_1}$ are mutually singular.
\item[(b)] if $m_1-m_0\in\mathcal{K}(L^2) = \{\mathcal{K}(f) \, : \, f\in L^2[0,1]\}$, then $P_{m_0}$ and $P_{m_1}$ are mutually absolutely continuous and model \eqref{Eq:logitL2} holds.
\item[(c)] if $m_1-m_0\not\in \mathcal{K}(L^2)$ model \eqref{Eq:logitL2} is never recovered, but different situations are possible, according to the condition in part (a). In particular if $m_0=0$, $m_1\in\mathcal{H}(K)$ recovers scenario (a), but if $m_1\not\in\mathcal{H}(K)$, $P_{m_0}$ and $P_{m_1}$ are mutually singular. 
\end{itemize}
\end{theorem}

\begin{proof}
	(a) Let $P_0$ be the measure induced by a Gaussian process with covariance function $K$ but zero mean function, $m\equiv 0$. From Theorem 7A in \cite{parzen1961approach} $m_0-m_1\in \mathcal{H}(K)$ implies that $P_{m_0-m_1}$ and $P_0$ are mutually absolutely continuous, and $m_0-m_1\notin \mathcal{H}(K)$ implies that $P_{m_0-m_1}$ and $P_0$ are mutually singular. By Lemma 1.1 in \cite{pitcher1960likelihood},  $P_{m_0-m_1}$ and $P_0$ are mutually absolutely continuous if and only if $P_{m_0}$ and $P_{m_1}$ are mutually absolutely continuous and, in this case, the corresponding Radon-Nikodym derivatives fulfill  
	\[
	\frac{\dd P_{m_0}}{\dd P_{m_1}}(X) = \frac{\dd P_{m_0-m_1}}{\dd P_{0}}(X-m_1) = 
	\exp\left\{\langle X-m_1,m_0-m_1\rangle_{K} - \frac{1}{2}\|m_0-m_1\|^2_{K}\right\}.
	\]
	The last equality also follows from Theorem 7A in \cite{parzen1961approach} (or Theorem~5A of \cite{parzen1961}). Notice that by the definition of Loève's isometry we have $\langle X-m_1,m_0-m_1\rangle_{K} = \langle X,m_0-m_1\rangle_{K} - \mathbb{E}_{m_1}[\langle X,m_0-m_1\rangle_{K}]$.

	The conditional probability of $Y=1$ can be expressed in terms of the Radon-Nikodym derivative of $P_1$ with respect to $P_0$ (see \citet[Th.1]{baillo2011}) by
	\begin{equation}\label{Eq:logRadon}
	\mathbb{P}(Y=1\, |\, X) \ = \ \frac{p\frac{\dd P_{m_1}}{\dd P_{m_0}}(X)}{p\frac{\dd P_{m_1}}{\dd P_{m_0}}(X) + (1-p)} \ = \ \left(1+\frac{1-p}{p}\frac{\dd P_{m_0}}{\dd P_{m_1}}(X) \right)^{-1}.
	\end{equation}
	
	From the last two displayed equations, one can rewrite
	\[
	\mathbb{P}(Y=1\, |\, X) = \left(1 + \frac{1-p}{p} \exp\left\{\langle X,m_0-m_1\rangle_{K} - \mathbb{E}_{m_1}[\langle X,m_0-m_1\rangle_{K}]  - \frac{1}{2}\|m_0-m_1\|^2_{K}\right\}  \right)^{-1}.
	\]
	Then, reordering terms in this expression we get the logistic model in part (a).
	
	\
	
	(b) Under the assumptions, Theorem 6.1 in \cite{rao1963} gives the following expression:
	\begin{equation*}
	\label{eq:RN-L2}
	\log\Big(\frac{\dd P_{m_1}}{\dd P_{m_0}}(x)\Big) = \langle x-m_0,\, \mathcal{K}^{-1}(m_1-m_0)\rangle_2 - \frac{1}{2} \, \langle m_1-m_0,\, \mathcal{K}^{-1}(m_1-m_0)\rangle_2,  
	\end{equation*}
	for $x\in L^2[0,1]$.  This entails (using the Chain Rule for Radon-Nikodym derivatives)
	$$
	\frac{\dd P_{m_0}}{\dd P_{m_1}}(x)=C\exp \left(-\langle x,\beta \rangle_2
	\right),
	$$
	where $\beta={\mathcal K}^{-1}(m_1-m_0)$ and $C=\exp(\langle m_0+m_1,\beta\rangle_2/2$. 
	Now, replacing this expression in \eqref{Eq:logRadon} 
	we get the $L^2$-model \eqref{Eq:logitL2} with $\beta_0=-\log\big(\frac{1-p}{p}C\big)$.

	\
	
	(c) Also as a consequence of Theorem 6.1 in \cite{rao1963}, if $m_1-m_0 \notin \operK(L^2)$ it is not possible to express the Radon-Nikodym derivative in terms of inner products in $L_2$ or, equivalently, there is not any continuous linear functional $L(x)$ and $c\in\mathbb{R}$ such that $\log(\frac{\dd P_1}{\dd P_0}(x)) = L(x) + c$.  Finally, the last sentence of the statement is a consequence of Theorem 5A of \cite{parzen1961}.
\end{proof}

\noindent \textit{Some comments on the meaning of  Theorem \ref{Teo:LogModel} }

\

		Similarly to the finite-dimensional case, our model holds when the conditional distributions of the process given the two possible values of $Y$ are Gaussian with the same covariance structure. Another interesting property of this new  RKHS-based  model is that for some particular choices of the slope function of type $\beta(\cdot)=\sum_i^pa_iK(t_i,\cdot)$, the model \eqref{eq:logistic} amounts to a finite-dimensional logistic regression model for which the  explanatory variables  are a finite number of projections of the trajectories of the process. Thus, the impact-point model studied by \cite{lindquist2009}  appears as  a particular case of the RKHS-based model  and, more generally, model \eqref{eq:logistic} can be seen as a true extension of the finite-dimensional logistic regression model, which is obtained when a finite-dimensional covariance matrix plays the role of the kernel. As an important by-product, this provides a mathematical ground for variable selection in logistic regression.

	Part (b) of this theorem has been recently observed by \cite{petrovich2019highly}, see Theorem~1, without reference to RKHS theory. Note that, in general,   $m_1-m_0 \notin \operK(L^2)$ does not imply that $P_{m_1}$ and $P_{m-0}$ are orthogonal. Parts (a) and (c) of  the theorem above clarifies this point. 
	
	On the other hand, in order to better interpret the above theorem in RKHS terms, let us recall that  the space $\calh(K)$ can be also defined as the image of the square root of the covariance operator defined in \eqref{Eq:operK} (e.g. Definition 7.2 of \cite{peszat2007}),
	$$\calh(K) = \{ \operK^{1/2}(f), \ f\in L^2[0,1]\},$$
	where now the inner product is defined, for $f,g \in \calh(K)$, as
	$$\langle f, g \rangle_K = \langle \operK^{-1/2}(f), \ \operK^{-1/2}(g) \rangle_2.$$
	It can be seen that this definition of $\calh(K)$ is equivalent to that given in Section \ref{sec:int}. Then, from part (c) of the theorem it follows that the RKHS functional logistic regression can be seen as a generalization of the usual $L_2$ functional logistic regression  model, in the sense that this $L_2$ model  is recovered when a higher degree of smoothness on the mean functions is imposed (since clearly $\operK(L^2)\subsetneq \calh(K)$). Indeed, the functions in $\operK(L^2)$ are convolutions of the functions in $L^2[0,1]$ with the covariance function of the process. The discussion of the next section makes clear that this difference is of key importance in practice and not merely a technicality.
	
As mentioned above, in the finite dimensional case the logistic model holds whenever $X|Y=i$ are Gaussian and homoscedastic, but in fact this model is more general in the sense that it also holds for other non-Gaussian assumptions on the conditional distributions $X|Y=i$.  Clearly this is also the case for the functional logistic model  \eqref{eq:logistic}.  In fact, the connection between the functional model and the finite-dimensional one is even deeper, as we will show in the following section.


\section{The RKHS model: some important particular cases} \label{sec:particular}

Dimension reduction in the functional logistic regression model may be often appropriate in terms of interpretability of the model and classification accuracy. This reduction must be done losing as little information as possible. We propose to perform variable selection on the curves. 
By variable selection we mean to replace each curve $x_i$ by the finite-dimensional vector $(x_i(t_1),\ldots,x_i(t_p))$, for some $t_1,\ldots,t_p$  chosen in an optimal way. In this section we analyze under which conditions it is possible to perform functional variable selection, which is only feasible under the RKHS-model. In the following section we  suggest  how to do it:  the idea is incorporating  the points $t_1,\ldots,t_p$ to the estimation procedure as additional parameters (in particular to the modified maximum likelihood estimator we propose).

Whenever the slope function $\beta$ has the form 
\begin{equation}\label{Eq:betafinLog}
\beta(\cdot) = \sum_{j=1}^p \beta_j K(t_j,\cdot),
\end{equation}
the model in \eqref{eq:logistic} is reduced to the finite-dimensional one,
\begin{equation}\label{eq:logfin}
\P(Y=1 | X) = \bigg(1+\exp\Big\{-\beta_0 - \sum_{j=1}^p \beta_j(X(t_j)-m(t_j))\Big\} \bigg)^{-1}.
\end{equation}
The main difference between the standard finite-dimensional model and this one is that now the proper choice of the points $T=(t_1,\ldots,t_p)\in[0,1]^p$ is a part of the estimation procedure.  In this sense, model \eqref{eq:logfin} is truly functional since we will use the whole trajectories $x_i(t)$ to select the points.   This fact leads to a critical difference between the functional and  the  multivariate problems. Then, our aim is to approximate the general model described by Equation~\eqref{eq:logistic} with finite-dimensional models as those of Equation~\eqref{eq:logfin}. This amounts to get an approximation of the slope function in terms of a finite linear combination of kernel evaluations $K(t_j,\cdot)$. This model, for $p=1$ and a particular type of Gaussian process $X$, is analyzed in \cite{lindquist2009}.

From the discussion above, it is clear that the differences between the RKHS model and the $L^2$ one are not minor technical questions. The functions of type $\beta(\cdot) = K(\cdot,t)$ belong to $\calh(K)$ but do not belong to $\operK(L^2)$. This fact implies that within the setting of the RKHS model it is possible to regress $Y$ on any finite dimensional projection of $X$, whereas  this  does not make sense if we consider the $L^2$ model. This feature is clearly relevant if one wishes to analyze properties of variable selection methods.  

\begin{theorem}\label{th:particular}
	Assume model (\ref{eq:logistic}) holds. Then,
	\begin{enumerate}
		\item[(a)] If there exist a positive integer $p$, $\beta_1,\ldots,\beta_p \in \mathbb{R}$, and $t_1,\ldots,t_p \in [0,1]$ such that $\beta(\cdot)=\sum_{j=1}^p \beta_j K(\cdot,t_j)$, then
		\[
		\mathbb{P}(Y=1\, |\, X=x) =  \frac{1}{1 + \exp\left\{-\beta_0 - \sum_{j=1}^p \beta_j (x(t_j)-m(t_j)) \right\}}
		\]
		\item[(b)] If $\beta\in \mathcal{K}(L^2)=\{\mathcal{K}(f):\, f\in L^2[0,1]\}$, then
		\[
		\mathbb{P}(Y=1\, |\, X=x) =  \frac{1}{1 + \exp\left\{-\beta_0 - \int_0^1 \alpha(t)(x(t)-m(t))dt \right\}},
		\]
		where $\alpha\in L^2[0,1]$ fulfills  $\beta = \mathcal{K}(\alpha)$.
		\item[(c)] Let $\{u_j\}$ be an orthonormal basis of $L^2[0,1]$. If there exist a positive integer $p$, and $\beta_1,\ldots,\beta_p \in \mathbb{R}$  such that $\beta = \sum_{j=1}^p \beta_j \mathcal{K}(u_j)$, then
		\[
		\mathbb{P}(Y=1\, |\, X=x) =  \frac{1}{1 + \exp\left\{-\beta_0 - \sum_{j=1}^p \beta_j\langle x-m,u_j\rangle_2 \right\}},
		\]
	\end{enumerate}
\end{theorem}

\begin{proof}
	(a) Observe that for $j=1,\ldots,p$, $X(t_j)-m(t_j)\in \mathcal{L}_0(K)$, and for all $s\in[0,1]$,
	\[
	\Psi_X(X(t_j)-m(t_j))(s) = \mathbb{E}[(X(s)-m(s))(X(t_j)-m(t_j))] = K(s,t_j).
	\]
	Therefore $\Psi_X^{-1}(k(\cdot, t_j)) = X(t_j)-m(t_j)$, and
	\[
	\langle X,\beta\rangle_K  = \sum_{j=1}^p \beta_j \langle X,k(\cdot,t_j)\rangle_K = \sum_{j=1}^p \beta_j(X(t_j)-m(t_j)).
	\]
	
	(b) Let $U :=  \int_0^1 \alpha(t)(X(t)-m(t))dt$. It holds that $U\in \mathcal{L}(X)$ (see e.g. \cite{ash2014topics}, page 34). Moreover,  by Fubini's theorem, for all $s\in[0,1]$ we have
	\begin{equation}
	\label{eq:models}
	\Psi_X(U)(s) = \mathbb{E}\left[\int_0^1 \alpha(t)(X(t)-m(t))dt \cdot (X(s) - m(s))\right] = \int_0^1 K(s,t)\alpha(t)dt = \beta(s),
	\end{equation}
	because $\mathcal{K}(\alpha)=\beta$. Therefore, $\langle X,\beta\rangle_K = U = \int_0^1 \alpha(t)(X(t)-m(t))dt$.
	
	(c) Putting $\alpha(t) = u_j(t)$ in (\ref{eq:models}) we get $\Psi_X(\langle u_j,X-m\rangle_2) = \mathcal{K}(u_j)$. As a consequence,
	\[
	\langle X,\beta\rangle_K  = \sum_{j=1}^p \beta_j \langle u_j,X-m\rangle_2.
	\]
\end{proof}

\noindent\textit{Some comments on the meaning of Theorem \ref{th:particular} }

\

Part (a) of the previous result means that the
impact point  model, as that considered in \cite{lindquist2009}, is a particular case of the RKHS model \eqref{eq:logistic}. Just take as parameter function $\beta$ a finite linear combination of evaluations of $K$.

Part (b) implies that the usual functional model based on the $L^2$ inner product is also a particular case of (\ref{eq:logistic}). What we need is that $\beta$ belongs to the image of the covariance operator, $\mathcal{K}(L^2)$. Notice that this condition is stronger than $\beta\in \mathcal{H}(K) = \mathcal{K}^{1/2}(L^2)$. As illustrated by part (a), the difference between  $\mathcal{K}^{1/2}(L^2)$ and $\mathcal{K}(L^2)$ may be important in practice. 

A very common methodology to fit functional regression models requires to project the functional regressors on a subspace defined by a finite set of orthonormal functions $u_1,\ldots, u_p$, and use the projections as regressor variables. Part (c) implies that model (\ref{eq:logistic}) also includes this situation for $\beta$ in the span of $\mathcal{K}(u_1),\ldots,\mathcal{K}(u_p)$. Note that if $\{u_j\}$ is the orthonormal basis of eigenfunctions of $\mathcal{K}$, we have $\mathcal{K}(u_j)$ is proportional to $u_j$, and the condition on $\beta$ reduces to the fact that $\beta$ belongs to the span of $u_1,\ldots,u_p$. If this is the case, there is no loss in using the first $p$ principal components of the regressors instead of the whole trajectories.

\section{Maximum likelihood estimation: non-existence results}\label{sec:MLE}

In the finite-dimensional setting, it is well-known that the  maximum likelihood (ML)  estimator does not exist when there is  an hyperplane separating the observations of the two classes; see below for details. As we will show in this section, this fact worsens dramatically for the case of  functional data; more specifically, we will see that:
\begin{itemize}
	\item[] For a wide class of process (including the Brownian motion),  the MLE just does not exist, with probability one (see Subsection \ref{subsec:NEML}).  
	\item[] Under some different conditions, in the Gaussian case, the probability of non-existence of the MLE tends to one  when the sample size tends to infinity (see Subsection \ref{subsec:asymptoticNE}).
	
\end{itemize}

\noindent \textit{A brief overview of the finite dimensional case}

\

Despite the fact that ML estimation of the slope function for multiple logistic regression is widely used, it has an important  drawback  that is sometimes overlooked. Given a sample $x_i^0\in \R^d$ for $i=1,\ldots,n_0$ drawn from population zero and another sample $x_i^1\in \R^d$ for $i=1,\ldots,n_1$ drawn from population one, the classical MLE in logistic regression is the vector $(b_0, b)\in\R\times\R^d$ that maximizes the log-likelihood
$$L_n(b,b_0) = \frac{1}{n_0} \sum_{i=1}^{n_0} \log\Big(\frac{e^{-b_0-b'x_i^0}}{1+e^{-b_0-b'x_i^0}} \Big) + \frac{1}{n_1} \sum_{i=1}^{n_1} \log\Big(\frac{1}{1+e^{-b_0-b'x_i^1}} \Big).$$

The existence and uniqueness of such a maximum was carefully studied by \cite{albert1984} (and previously by \cite{silvapulle1981} and \cite{gourieroux1981}). As stated in Theorem 1 of \cite{albert1984}, the latter expression can be made arbitrarily close to zero (note that the log-likelihood is always negative) whenever the samples of the two populations are linearly separable. In that case the maximum can not be attained and then the MLE does not exist (the idea behind the proof is similar to the one of Theorem \ref{Teo:ne} below). There is another scenario where this estimator does not exist; the samples are linearly separable except for some points of both populations that fall into the separation hyperplane (named ``quasicomplete separation''). In this case the supremum of the log-likelihood function is strictly less than zero, but it is anyway unattainable.

\

\noindent \textit{The likelihood function in the logistic functional model}

\

Before going on with the functional case (which is our main target here), we need to derive the likelihood function. Let assume that $\{X(s), s\in[0,1]\}$ follows the RKHS logistic model described in Equation \eqref{eq:logistic}. That is, 
$$\beta_0+\Psi_X^{-1} (\beta) \ \equiv \ \beta_0+\langle X, \beta \rangle_K \ = \ \log \Big(\frac{p_{\beta,\beta_0}(X)}{1-p_{\beta,\beta_0}(X)}\Big),$$
where $p_{\beta,\beta_0}(X) = \P (Y=1 | X,\beta,\beta_0)$, $\beta_0\in\R$ and $\beta \in \mathcal{H}(K)$. The random element $(X(\cdot),Y)$ takes  values in the space $Z=L^2[0,1]\times \{0,1\}$, which is a measurable space with measure $z = P_X\times \mu$, where $P_X$ is the distribution induced by the process $X$ and $\mu$ is the counting measure on $\{0,1\}$. We can define in $Z$ the measure $P_{(X,Y);\beta,\beta_0}$, the joint probability induced by $(X(\cdot),Y)$ for a given slope function $\beta$ and an intercept $\beta_0$.  Then we define, 
\begin{eqnarray*}
f_{\beta,\beta_0}(x,y) & = & \frac{\mathrm{d} P_{(X,Y);\beta,\beta_0}}{\mathrm{d} z}(x,y) =  p_{\beta,\beta_0}(x)^{y}\left(1-p_{\beta,\beta_0}(x)\right)^{1-y}\\
&=&\Big(\frac{1}{1+e^{-\beta_0-\langle \beta, x\rangle_K}}\Big)^y \Big(\frac{e^{-\beta_0-\langle \beta, x\rangle_K}}{1+e^{-\beta_0-\langle \beta, x\rangle_K}}\Big)^{1-y}.
\end{eqnarray*}
In view of this density function, the log-likelihood function for a given sample in $L^2[0,1]\times\{0,1\}$ is 
$$L_n(\beta,\beta_0) \ = \ \frac{1}{n} \sum_{i=1}^n \log\big(p_{\beta,\beta_0}(x_i)^{y_i}\left(1-p_{\beta,\beta_0}(x_i)\right)^{1-y_i}\big),$$
where $(x_i,y_i)\in L^2[0,1]\times \{0,1\}$ is a sample of the underlying random variable $(X,Y)$.

The maximum likelihood estimator is the pair $(\wh\beta,\wh\beta_0)$ that maximizes this function $L_n$.  The population counterpart of $L_n$ is the expected log-likelihood function,
\begin{equation}\label{Eq:flrloglike} 
L(\beta,\beta_0) \ = \ \E_Z \left[ \log f_{\beta,\beta_0}(X,Y) \right] \ = \ \E_Z\left[ \log\left(p_{\beta,\beta_0}(X)^{Y}\left(1-p_{\beta,\beta_0}(X)\right)^{1-Y}\right)\right],
\end{equation}
 where $\E_Z[\cdot]$ denotes the expectation with respect to the measure $\dd z$.  
 
 The main idea behind ML-estimation stands in the infinite-dimensional situation. If our ``parameter space'' is $\Theta\subset {\mathcal H}(K)\times {\mathbb R}$ and the ``true'' value of the parameter is $(\beta^*,\beta_0^*)\in\Theta$, then a simple, standard argument based on 
 Jensen's inequality shows that the population log likelihood function $L(\beta,\beta_0)$ fulfils 
 $$
 L(\beta^*,\beta_0^*)\geq L(\beta,\beta_0),\ \ \mbox{for all}\ \ (\beta,\beta_0)\in\Theta.
 $$
 This leads to the usual, natural idea of maximizing a consistent estimator of $L(\beta^*,\beta_0^*)$ that, in our logistic model, is the log-likelihood function
 $L_n(\beta,\beta_0)$ defined above.

\

\subsection{Non-existence of the MLE in functional settings}\label{subsec:NEML}

We first show that, when moving from the finite-dimensional model to the functional one, the problem of the non-existence of the MLE is drastically worsened. 

This situation is quite similar to that arising, for example, in non-parametric density estimation where non-parametric (and non-penalized) ML estimators of the density function do not exist, unless some drastic restrictions, such as monotonicity (e.g., \cite{grenander1981}) or log-concavity (see, \cite{cule2010}) are imposed on the underlying density function.

Since the analogous non-existence result for the case of the functional logistic regression model is not perhaps so direct, it is established in Theorem \ref{Teo:ne} below. We confine ourselves to the  RKHS-based model \eqref{eq:logistic}, although the result can be easily extended, with a completely similar method of proof, for the standard $L^2$ based model of Equation \eqref{Eq:logitL2}.

We first will need to establish a condition which plays, in the functional case, a similar role to that of the linear separability condition mentioned above in the setting of finite-dimensional logistic regression. 

\begin{assumption}[\textbf{SC}]
The multivariate process $Z(t)=(X_1(t),\ldots, X_n(t))$, $t\in[0,1]$ satisfies the ``Sign Choice'' (SC) property when for all possible choice of signs $(s_1,\ldots,s_n)$, where $s_j$ is either $+$ or $-$, we have that, with probability one, there exists some $t_0\in[0,1]$ such that $\mbox{sign}(X_1(t_0))=s_1,\ \ldots, \ \mbox{sign}(X_n(t_0))=s_n$. 
\end{assumption}

Now, the non-existence result is as follows. Without loss of generality we confine ourselves to the case ${\mathbb E}(X(t))=0$. 

\begin{theorem}\label{Teo:ne}
Let $X(s)$, $s\in[0,1]$, be an $L^2$ stochastic process with $\E[X(s)]=0$. Denote by $K$ the corresponding covariance function. Consider a logistic model \eqref{eq:logistic} based on $X(s)$. Let $X_1,\ldots,X_n$ be independent copies of $X$. Assume that the $n$-dimensional process $Z_n(s)=(X_1(s),\ldots,X_n(s))$ fulfills the SC property. Then, with probability one,  the MLE estimator of $(\beta,\beta_0)$ (i.e., the maximizer of the log-likelihood function $L_n(\beta,\beta_0)$)  does not exist for any sample size $n$.
\end{theorem}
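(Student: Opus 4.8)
The plan is to reduce the non-existence of the MLE to a statement of \emph{linear separability} in $\calh(K)$, exactly as in the finite-dimensional case, and then to produce such a separating direction from a single kernel section $K(t_0,\cdot)$ together with the SC property. First I would record the elementary bound that for any finite $(\beta,\beta_0)\in\calh(K)\times\R$ and any trajectory $x$ the value $p_{\beta,\beta_0}(x)$ lies strictly in $(0,1)$, so every summand of $L_n$ is strictly negative and hence $L_n(\beta,\beta_0)<0$ while its least upper bound is $0$. Consequently a maximizer can only exist if the value $0$ is attained, which it never is at finite parameters; non-existence therefore follows as soon as one can drive $L_n$ to $0$ along an unbounded path, and this is possible precisely when the sample is separated, i.e.\ when there is a direction along which $\beta_0+\langle\beta,x_i\rangle_K$ carries the sign of $2y_i-1$ for every $i$.

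The key computational step is the evaluation identity $\langle K(t_0,\cdot),x\rangle_K=x(t_0)$, read through Lo\`eve's isometry. Since $m\equiv 0$ here, the image of the random variable $X(t_0)$ under $\Psi_X$ is $\E[X(t_0)X(\cdot)]=K(t_0,\cdot)$, so $\Psi_X^{-1}(K(t_0,\cdot))=X(t_0)$, and therefore taking $\beta=K(t_0,\cdot)$ gives $\langle\beta,x\rangle_K=x(t_0)$. This is exactly what makes a single-point slope reduce the RKHS inner product to a pointwise evaluation, and it is the place where the RKHS structure rather than the $L^2$ structure is essential, since $K(t_0,\cdot)\in\calh(K)$ but need not lie in $\operK(L^2)$.

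With these ingredients the argument closes quickly. There are only $2^n$ possible sign vectors, so by a union bound the SC property holds simultaneously for all of them on a single event of probability one. On that event, whatever the realized labels $(y_1,\dots,y_n)$ turn out to be, I set the target signs $s_i=+$ when $y_i=1$ and $s_i=-$ when $y_i=0$, and SC furnishes a point $t_0\in[0,1]$ with $\mathrm{sign}(x_i(t_0))=s_i$ for all $i$. Taking $\beta=K(t_0,\cdot)$ and $\beta_0=0$ then yields $\langle\beta,x_i\rangle_K=x_i(t_0)$, strictly positive on class $1$ and strictly negative on class $0$, so the sample is separated. Letting $\beta^{(\lambda)}=\lambda K(t_0,\cdot)$ with $\lambda\to\infty$ sends $\lambda x_i(t_0)\to+\infty$ on class $1$ and $\to-\infty$ on class $0$, so each summand of $L_n(\beta^{(\lambda)},0)$ tends to $0$ and hence $L_n(\beta^{(\lambda)},0)\to 0$; since this supremum is never attained at finite parameters, no maximizer exists, and this holds for every $n$ for which SC is in force.

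I expect the main obstacle to be the careful justification of the identity $\langle K(t_0,\cdot),x\rangle_K=x(t_0)$ for a \emph{realization} $x=X(\omega)$, because the trajectories need not belong to $\calh(K)$ and the bracket must be interpreted as $\Psi_x^{-1}(\beta)$ rather than a genuine inner product; one must check that pointwise evaluation is indeed the image under the inverse isometry and that the resulting separation is valid on the very same probability-one event supplied by SC. The remaining measure-theoretic point, that the finitely many SC events can be intersected, is routine.
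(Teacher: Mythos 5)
Your proposal is correct and follows essentially the same route as the paper's proof: use the SC property to obtain a point $t_0$ separating the signs of $x_i(t_0)$ according to the labels, take $\beta=\lambda K(t_0,\cdot)$ so that Lo\`eve's isometry turns the RKHS bracket into the evaluation $\lambda x_i(t_0)$, and let $\lambda\to\infty$ to drive the (always negative) log-likelihood to its unattained supremum $0$. Your explicit intersection of the $2^n$ probability-one SC events and your justification of $\Psi_X^{-1}(K(t_0,\cdot))=X(t_0)$ are small points of care that the paper leaves implicit, but the argument is the same.
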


\begin{proof}
Let $x_1(s)\ldots,x_n(s)$ be a random sample drawn from $X(s)$. From the SC assumption there is (with probability 1) one point $t_0$  such that $x_i(t_0)>0$ for all $i$ such that $y_i=1$ and $x_i(t_0)<0$ for those indices $i$ with $y_i=0$.  Note that  that the sample log-likelihood function can be split in two terms, as follows,
$$L_n(\beta,\beta_0) \ = \frac{1}{n}\sum_{\{i:\,y_i=1\}} \log \Big(\frac{1}{1+e^{-\beta_0-\langle \beta,x_i\rangle_K}}\Big) + \frac{1}{n}\sum_{\{i:\,y_i=0\}} \log \Big(\frac{e^{-\beta_0-\langle \beta,x_i\rangle_K}}{1+e^{-\beta_0-\langle \beta,x_i\rangle_K}}\Big).$$
 Note also that $L_n(\beta,\beta_0)\leq 0$ for all $\beta$. Now, take a numerical sequence $0<c_m \uparrow \infty$ and define 
$$\beta_m(\cdot) \ = \ c_m K(t_0,\cdot).$$
 Then, by the definition of Loève's isometry, if $y_i=0$,
$$ \langle \beta_m,x_i\rangle_K \ = \ c_m x_i(t_0) \ \to \ \infty,\ \text{ as } m\to\infty, $$
since we have taken $t_0$ such that $x_i(t_0)>0$ for those indices $i$ with $y_i=1$. Likewise, $\langle \beta_m,x_i\rangle_K$ goes to $-\infty$ whenever $y_i=0$ since we have chosen $t_0$ such that $x_i(t_0)<0$ for those indices. As a consequence $L_n(\beta_m,0)\to 0$ as $m\to \infty$. Therefore the likelihood function can be made arbitrarily large so that the MLE does not exist. 
\end{proof} 

\begin{remark}
A non-existence result for the MLE estimator, analogous to that of Theorem~\ref{Teo:ne}, can be also obtained with a very similar reasoning for the $L^2$-based logistic model of Equation~\eqref{Eq:logitL2}. The main difference in the proof would be the construction of $\beta_m$ which, in the $L^2$ case, should be obtained as an approximation to the identity (that is, a linear ``quasi Dirac delta'') centered at the point $t_0$. 
\end{remark}

Although  the SC property  could seem a somewhat restrictive assumption,  the following proposition shows that   it  applies to some important and non-trivial situations. 

\begin{proposition}\label{ex}
	(a) The $n$-dimensional Brownian motion fulfills the SC property.
	
	(b) The same holds for any other $n$-dimensional process in $[0,1]$ whose independent marginals have a distribution absolutely continuous with respect to that of the Brownian motion. 
\end{proposition}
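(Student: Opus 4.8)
The plan is to handle each sign pattern separately and then take a finite intersection. Fix a sign vector $s=(s_1,\dots,s_n)\in\{+,-\}^n$ and let $O_s\subset\R^n$ denote the corresponding open orthant. Since there are only $2^n$ patterns, it suffices to prove that for each fixed $s$ the event that the path of $Z_n$ enters $O_s$ at some time of $[0,1]$ has probability one; the SC property then follows by intersecting these finitely many almost-sure events. Note also that entering the \emph{open} orthant automatically realizes the strict signs required in the statement, so there is nothing to check about ties $X_i(t_0)=0$.

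For part (a), write $B(t)=(B_1(t),\dots,B_n(t))$ for the $n$-dimensional Brownian motion started at the origin and consider the germ event
$$A_s=\bigcap_{\eps>0}\{\exists\, t\in(0,\eps):\ B(t)\in O_s\}.$$
By continuity of the paths this event is measurable (the inner event may be tested along rational $t$) and lies in the germ $\sigma$-field $\mathcal{F}_{0+}$, so Blumenthal's $0$--$1$ law gives $\P(A_s)\in\{0,1\}$. To rule out the value $0$ I would use a crude fixed-time bound: for every $\eps>0$ the inclusion $\{B(\eps/2)\in O_s\}\subseteq\{\exists\,t\in(0,\eps):B(t)\in O_s\}$, together with the independence and symmetry of the coordinates, yields $\P(\exists\,t\in(0,\eps):B(t)\in O_s)\ge 2^{-n}$. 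Letting $\eps\downarrow0$ and using continuity of the measure from above gives $\P(A_s)\ge 2^{-n}>0$, hence $\P(A_s)=1$. Taking $\eps=1$ shows that, almost surely, $B$ visits $O_s$ at some $t_0\in(0,1)\subset[0,1]$, which is exactly what is needed.

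For part (b), I would transfer the conclusion of (a) by absolute continuity. Let $W$ denote the Wiener measure on the relevant path space, so that the law of the $n$-dimensional Brownian motion is the product $W^{\otimes n}$. If the independent marginals $X_1,\dots,X_n$ have laws $\mu_i\ll W$, then the joint law of $Z_n=(X_1,\dots,X_n)$ is the product $\mu_1\otimes\cdots\otimes\mu_n$, which is absolutely continuous with respect to $W^{\otimes n}$ (with Radon--Nikodym derivative $\prod_i \dd\mu_i/\dd W$). By part (a), for each fixed sign pattern the complement of the corresponding visiting event is $W^{\otimes n}$-null; absolute continuity preserves null sets, so this complement is also null under $\mu_1\otimes\cdots\otimes\mu_n$. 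Intersecting over the finitely many sign patterns shows that $Z_n$ satisfies SC.

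The main obstacle is the joint sign control in part (a): although it is classical that each individual coordinate of Brownian motion changes sign in every neighbourhood of $0$, one must instead prescribe the signs of all $n$ coordinates \emph{simultaneously}, at a common time $t_0$. The device that overcomes this is the pairing of the very weak single-time lower bound $2^{-n}$ (which only uses independence at one instant) with Blumenthal's $0$--$1$ law, which upgrades mere positivity of the germ event $A_s$ to full probability. Once (a) is phrased through this germ event, part (b) is a routine absolute-continuity argument.
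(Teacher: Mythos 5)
Your proof is correct and follows essentially the same route as the paper's: a germ event in $\mathcal{F}_{0+}$, Blumenthal's $0$--$1$ law, the single-time lower bound $2^{-n}$ from independence and symmetry to exclude probability zero, and transfer of the null set by absolute continuity for part (b). The only differences are cosmetic refinements (making the monotone limit in $\eps$ explicit, and noting that the product of absolutely continuous marginal laws is absolutely continuous with respect to the product Wiener measure), both of which are consistent with the paper's argument.
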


\begin{proof}
	(a) Given the $n$ dimensional Brownian motion $\calb_n=(B_1,\ldots,B_n)$, where the $B_j$ are independent copies of the standard Brownian motion $B(t)$, $t\in[0,1]$, take a sequence of signs $(s_1,...,s_n)$ and define the event
	\begin{equation}
	A=\{\mbox{for any given } t \mbox{ there exists } 0<t_0<t \mbox{ s.t. } \mbox{sign}(B_j(t_0))=s_j,\ j=1,\ldots,n\}\label{A}
	\end{equation}
	We may express this event by
	\begin{equation}
	A=\bigcap_{t\in (0,1]\cap\mathbb{Q}} A_t,\label{At}
	\end{equation}
	where, for each $t\in(0,1]\cap\mathbb{Q}$, 
	$$
	A_t=\{\mbox{there exists } t_0<t \mbox{ such that } \mbox{sign}(B_j(t_0))=s_j,\ j=1,\ldots,n\}.
	$$
	Now, the result follows directly from Blumenthal's 0-1 Law for n-dimensional Brownian processes (see, e.g., \citet[p. 38]{morters2010}). Such result establishes that for any event $A\in{\mathcal F}^+(0)$ we have either ${\mathbb P}(A)=0$ or ${\mathbb P}(A)=1$. Here ${\mathcal F}^+(0)$ denotes the \textit{germ $\sigma$-algebra} of events depending only on the values of $\calb_n(t)$ where $t$  lies  in an arbitrarily small interval on the right of 0. More precisely,
	$$
	{\mathcal F}^+(0)=\bigcap_{t>0} {\mathcal F}^0(t),\ \mbox{where } {\mathcal F}^0(t)=\sigma(\calb_n(s),0\leq s\leq t).
	$$ 
	From \eqref{A} and \eqref{At} it is clear that the above defined event $A$ belongs to the germ $\sigma$-algebra ${\mathcal F}^+(0)$. However, we cannot have ${\mathbb P}(A)=0$ since (from the symmetry of the Brownian motion) for any given $t_0$ the probability of $\mbox{sign}(B_j(t_0))=s_j,\ j=1,\ldots,n$ is $1/2^n$. So, we conclude 
	${\mathbb P}(A)=1$ as desired.

	(b) If $X(t)$ is another process whose distribution is absolutely continuous with respect to that of the n-dimensional Brownian motion $\calb_n$, then the set $A$, defined by \eqref{A} and \eqref{At} in terms of $\calb_n$ has also probability one when it is defined in terms of the process $X(t)$: recall  that, from the definition of absolute continuity, if the set $A^c$ has  probability zero under the Brownian motion, then its probability must be zero as well when $B(t)$ is replaced with $X(t)$. Therefore, the probability of $A$ under $X=X(t)$ must be one. 
\end{proof}

\begin{remark}
Following the comment in \cite{morters2010} about processes with strong Markov property, this result based on RKHS theory can be extended for L\'evy processes whenever the covariance function was continuous (like Poisson process in the real line). However note that, apart from the Brownian motion, this type of processes have discontinuous trajectories.
\end{remark}

 The situation considered in Theorem \ref{Teo:ne}  would be the functional counterpart of having a finite-dimensional problem where the supports of both classes (0 and 1) are linearly separable. However,  as we have just seen, this separability issue does not only appear in degenerate problems in the functional setting. In the next section we suggest a technique to completely avoid the problem.

From a theoretical perspective, in view of Theorem \ref{Teo:ne}, it is clear that there is no hope of obtaining a general convergence result of the standard  maximum likelihood estimator (MLE)  defined by the maximization of the likelihood function $L_n(\beta,\beta_0)$. That is, one should define a different estimator or impose some conditions on the process $X$ to avoid the SC property. For instance, \cite{lindquist2009} prove consistency results of the model with a single impact point $\theta\in [0,1]$ for processes $X(t) = Z + B_\theta(t)$, where $B_\theta$ is a two-sided Brownian motion centered in $\theta$ (i.e. two independent Brownian motions starting at $\theta$ and running in opposite directions) and $Z$ is a real random variable independent of $B_\theta$. Then, due to the independence assumption, it is clear that accumulation points (like 0 for the Brownian motion) are avoided.

\subsection{Asymptotic non-existence for Gaussian processes}\label{subsec:asymptoticNE}

In the previous section we have seen that the problem of non-existence of the  MLE is aggravated for the case functional data. But this is not the only issue with MLE in functional logistic regression. In this section we see that the probability that the MLE does not exist goes to one as the sample size increases, for any Gaussian process satisfying very mild assumptions.

We use the following notation: for $T=\{t_1,\ldots,t_p\}\subset [0,1]$ and $f\in L^2[0,1]$, let $f(T):=(f(t_1),\ldots,f(t_p))'$ and let $\Sigma_T$ be the $p\times p$ matrix whose $(i,j)$ entry is $K(t_i,t_j)$. 

\begin{theorem}\label{Teo:neasym}
Let $(x_1,y_1),\ldots,(x_n,y_n)$ be a random sample of independent observations satisfying model (\ref{eq:logistic}). Assume that $X$ is a Gaussian process such that $K$ is continuous and $\Sigma_T$ is invertible for any finite set $T\subset (0,1)$. It holds 
\[
\lim_{n\to\infty} \P(\mbox{MLE exists})=0. 
\]
\end{theorem}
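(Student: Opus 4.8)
The plan is to show that, with probability one for every $n$ (which \emph{a fortiori} forces the stated limit), the two samples can be \emph{strictly separated} inside $\calh(K)$, so that the log-likelihood $L_n$ of Equation \eqref{Eq:Ln} approaches its supremum $0$ without ever attaining it; by the same mechanism underlying the finite-dimensional Albert--Anderson theory recalled above, this precludes the existence of the MLE. The device is the finite-dimensional reduction of Equations \eqref{Eq:betafinLog}--\eqref{eq:logfin}: for a slope of the form $\beta=\sum_{j=1}^n\beta_j K(t_j,\cdot)\in\calh(K)$ one has $\langle\beta,x_i\rangle_K=\sum_{j=1}^n\beta_j\big(x_i(t_j)-m(t_j)\big)$, so separation in $\calh(K)$ reduces to separation of the evaluation vectors $\big(x_i(t_1),\dots,x_i(t_n)\big)$ in $\R^n$.

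First I would fix $n$ distinct points $t_1,\dots,t_n\in(0,1)$ and form the random $n\times n$ matrix $M$ with entries $M_{ij}=x_i(t_j)-m(t_j)$. Solving the linear system $M\gamma=c$, where $c_i=+1$ if $y_i=1$ and $c_i=-1$ if $y_i=0$, produces coefficients $\gamma=(\gamma_1,\dots,\gamma_n)$ for which the slope $\beta=\sum_j\gamma_j K(t_j,\cdot)$ satisfies $\langle\beta,x_i\rangle_K=c_i$ with the correct sign for every $i$. Taking $\beta_0=0$ and rescaling $\beta\mapsto t\beta$ with $t\to\infty$ drives each summand of $L_n(t\beta,0)$ to $0^-$, so $\sup L_n=0$ is approached but, since every term is strictly negative at finite parameters, never attained. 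Hence the event that the MLE exists is contained in $\{M\text{ is singular}\}$.

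The heart of the argument is therefore that $M$ is nonsingular almost surely. The rows $\big(x_i(t_1)-m(t_1),\dots,x_i(t_n)-m(t_n)\big)$, $i=1,\dots,n$, are i.i.d.; under the model each $x_i$ is drawn from the mixture of the two conditional Gaussians, so the law of a single row on $\R^n$ is a mixture of two Gaussians with common covariance $\Sigma_T$, invertible by hypothesis. Such a mixture is absolutely continuous on $\R^n$, hence so is the joint law of the $n^2$ entries of $M$ on $\R^{n\times n}$; since $\{\det=0\}$ is a Lebesgue-null algebraic set, $\P(\det M=0)=0$. Consequently $\P(\text{MLE exists})=0$ for each $n$, and in particular $\lim_{n\to\infty}\P(\text{MLE exists})=0$.

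The point requiring care is twofold. One must justify the identity $\langle\beta,x_i\rangle_K=\sum_j\gamma_j\big(x_i(t_j)-m(t_j)\big)$ through Lo\`eve's isometry for \emph{realizations} $x_i$, since the trajectories need not lie in $\calh(K)$; this is exactly the congruence in Equation \eqref{eq:isometry} applied to $K(t_j,\cdot)$, whose image is $X(t_j)-m(t_j)$. The second delicate step is that the marginal law of $X$ is a two-component Gaussian mixture rather than a single Gaussian, so the absolute-continuity argument must be run for the mixture; this is harmless precisely because both components share the invertible covariance $\Sigma_T$. I would also note that this construction in fact yields non-existence with probability one for every fixed $n$, a conclusion somewhat stronger than the asymptotic statement being proved, the asymptotic phrasing serving mainly to contrast with the sharper almost-sure result of Theorem~\ref{Teo:ne} under the more restrictive SC property.
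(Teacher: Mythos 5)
Your proof is correct, but it follows a genuinely different route from the paper's. The paper's argument rests on the phase-transition theorem of Cand\`es and Sur for high-dimensional logistic regression: it discretizes the model on $p_n$ equispaced points with $p_n/n\to\kappa$ chosen above the critical value $h(\beta_0^*,\|\beta^*\|_K)$, verifies the required variance condition $\alpha_{T_n}'\Sigma_{T_n}\alpha_{T_n}\to\|\beta^*\|_K^2$ via Parzen's results, concludes that the discretized data are linearly separable with probability tending to one, and then transfers separability to non-existence of the functional MLE by exactly the scaling argument you use. You replace that machinery with an elementary interpolation argument: taking $p=n$ distinct evaluation points, the $n\times n$ matrix with entries $x_i(t_j)-m(t_j)$ has i.i.d.\ rows whose law on $\R^n$ is absolutely continuous (since $\Sigma_T$ is invertible), hence it is almost surely nonsingular, so any labelling can be realized exactly by an element of $\calh_0(K)$ and the sample is strictly separable with probability one. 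This is simpler, self-contained, and yields the strictly stronger conclusion $\P(\mbox{MLE exists})=0$ for every fixed $n$, not merely in the limit; it is also consistent with the Cand\`es--Sur picture, since $\kappa=1$ always exceeds $h\leq \E[Z_+^2]=1/2$. What the paper's heavier route buys is precisely the exhibition of the $p/n$ threshold, which underpins the practical message of Section \ref{sec:exPr} that the dimension of the finite approximations must stay bounded (below $\kappa n$) for estimation to be feasible; your argument does not expose that threshold. Two minor points: under the hypotheses of this theorem $X$ is marginally Gaussian, so each row of your matrix is a single nondegenerate Gaussian rather than a two-component mixture (your absolute-continuity conclusion is unaffected either way), and you should say explicitly that the $t_j$ are chosen distinct so that the invertibility assumption on $\Sigma_T$ applies.
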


\begin{proof}
Let $\beta^*\in\mathcal{H}_K,\beta_0^*$ be the true values of the parameters. Since $\|\beta^*\|_K<\infty$, we have $h(\beta_0^*, \|\beta^*\|_K)<\infty$, where $h$ is the function defined in \cite{candes2020phase}, Equation (2.2) (see  Remark \ref{remark:candes} below). Let $p_n$ be an increasing sequence of natural numbers such that $\lim_{n\to\infty} p_n/n =\kappa > h(\beta_0^*, \|\beta^*\|_K)$.  Consider the set of equispaced points $0<t_1<t_2<\cdots<t_{p_n}<1$ and denote $T_n=\{t_1,\ldots,t_{p_n}\}$. Define $\alpha_{T_n} = \Sigma_{T_n}^{-1}\beta^*(T_n)$. Now, consider the following sequence of finite-dimensional logistic regression models
\[
\mathbb{P}(Y=1\, |\, X) = \frac{1}{1 + \exp\left\{-\beta_0^* - \alpha_{T_n}'X(T_n)  \right\}},
\]
where $a'b$ stands here for the inner product of two vectors $a,b$ in ${\mathbb R}^{p_n}$, and the following sequence of events
\[
E_n = \{\mbox{There exists}\ \alpha\in\mathbb{R}^{p_n}:\, \alpha'x_i(T_n)\geq 0,\ \mbox{if}\ y_i=1;\ \ \alpha'x_i(T_n)\leq 0,\ \mbox{if}\ y_i=0\}.
\]
Recall that the event $E_n$ amounts to non-existence of MLE for finite-dimensional logistic regression models (see \cite{albert1984}).

Now let us prove the validity of condition (1.3) in \cite{candes2020phase}, which is required for the validity of Theorem 2.1. in that paper. In our case, such condition amounts to 
\[
\lim_{n\to\infty} \var\big(\alpha_{T_n}'X(T_n)\big) = \lim_{n\to\infty} \alpha_{T_n}'\Sigma_{T_n}\alpha_{T_n}=\|\beta^*\|_K^2,
\]
but this directly follows from Theorem 6E of \cite{parzen1959}. Since $\lim_{n\to\infty} p_n/n =\kappa > h(\beta_0^*, \|\beta^*\|^2_K)$ we apply Theorem 2.1. in  \cite{candes2020phase} to  get  $\lim_{n} \mathbb{P}(E_n)=1$.

Now we define the auxiliary sequence of events
 \[
\widetilde{E}_n = \{\mbox{There exists}\ \alpha\in\mathbb{R}^{p_n}:\, \alpha'x_i(T_n)> 0,\ \mbox{if}\ y_i=1;\ \ \alpha'x_i(T_n)< 0,\ \mbox{if}\ y_i=0\},
\]
with strict inequalities. Assume that $\widetilde{E}_n$ happens so that there exists a separating hyperplane defined by $\alpha\in\mathbb{R}^{p_n}$. Then, in the same spirit as in the proof of Theorem \ref{Teo:ne}, it is possible to show that if $\hat{\beta}_{m,n}=m\sum_{j=1}^{p_n}\alpha_jK(\cdot,t_j)\in\mathcal{H}_K$, then  $\lim_{m\to\infty} L_n(\hat{\beta}_{m,n},0)=0$,  where  $L_n(\beta,\beta_0)$  is the log-likelihood function. As a consequence, for all $n$, if $\widetilde{E}_n$ happens, then the MLE for the RKHS functional logistic regression model does not exist. The result follows from the fact that $\P(E_n) = \P(\widetilde{E}_n)$ and the events $\alpha'x_i(T_n)=0$ have probability zero  since we  are assuming that the process does not have degenerate marginals.
\end{proof}

\begin{remark}\label{remark:candes}
Theorem 2.1. in \cite{candes2020phase} is a remarkable result. It applies to logistic finite-dimensional regression models with a number $p$ of covariables, which is assumed to grow to infinity with the sample size $n$, in such a way that $p/n\to\kappa$. Of course, the sample is given by data $(x_i,y_i)$, $i=1,\ldots,n$. Essentially the result establishes that there is a critical value such that, if $\kappa$ is smaller than such critical value, one has $\lim_{n,p\to\infty} \mathbb{P}(\text{MLE exists})=1$; otherwise we have $\lim_{n,p\to\infty} \mathbb{P}(\text{MLE exists})=0$. Such critical value is given in terms of a function $h$ (which is mentioned in the proof of the previous result) whose definition is as follows. Let us use the notation $(\widetilde{Y},V)\sim F_{\beta_0,\gamma_0}$ whenever $(\widetilde{Y},V)\overset{d}{=}(\widetilde{Y},\widetilde{Y}X)$, for $\widetilde{Y}=2Y-1$ (note that, in the notation of \cite{candes2020phase}, the model is defined for the case that the  response  variable is coded in $\{-1,1\}$), $\beta_0,\gamma_0\in\mathbb{R}$, $\gamma_0\geq 0$ and where $X\sim \mathcal{N}(0,1)$ and $\mathbb{P}(\widetilde{Y}=1|X) = (1+\exp\{-\beta_0-\gamma_0 X\})^{-1}$. Now, define $h(\beta_0,\gamma_0) = \min_{t_0,t_1\in\mathbb{R}}\mathbb{E}[(t_0 \widetilde{Y} + t_1 V - Z)_+^2]$, where $Z\sim \mathcal{N}(0,1)$ independent of $(\widetilde{Y},V)$ and $x_+ = \max\{x,0\}$. Then, Theorem 2.1. in \cite{candes2020phase} proves that the above mentioned critical value for $\kappa$ is precisely $h(\beta_0,\gamma_0)$.
\end{remark}

\section[The estimation of $\beta$ in practice: an RKHS motivated proposal]{ The estimation of $\boldsymbol{\beta}$ in practice} \label{sec:exPr}

The problem of non-existence of the MLE can be circumvented if the goal is variable selection. The main idea behind the proof of Theorem \ref{Teo:neasym} is that one can approximate the functional model with finite approximations as those in \eqref{eq:logfin} with $p$ increasing as fast as desired. Therefore, if we constrain $p$ to be less than a finite fixed value, Theorem \ref{Teo:neasym} does not apply.  

In order to sort out the non-existence problem  for a given sample (due to the SC property), it would be enough to use a finite-dimensional estimator that is always defined, even for linearly separable samples. As mentioned, an extensive study of existence and uniqueness conditions of the MLE for multiple logistic regression can be found in the paper of \cite{albert1984}. 

A simple, RKHS-motivated alternative would be as follows. In many cases one could assume that the ``true parameter'' $(\beta^*,\beta_0^*)$ belongs to a 
bounded set $ B_K(0,R)\times I$, $I$ being a compact interval in the real line and $B_K(0,R)$ the closed ball centered at zero, with radius $R$ in the RKHS associated with the covariance function $K$. This restriction of searching for an estimator in a ball within the parameter space resembles other regularization methods in regression such as ridge or lasso.

If $K$ is continuous and bounded, all functions $f$  in the RKHS space are continuous as well and, using the reproducing property $\langle f,K(\cdot, t)\rangle_K=f(t)$, we get
$$
\Vert f\Vert_\infty=\sup_t|\langle f,K(\cdot, t)\rangle_K|\leq \Vert f\Vert_K\sup_t K(t,t).
$$
If, for simplicity, we assume that $\sup_t K(t,t)=1$, we have (from the definition of the RKHS ${\mathcal H}(K)$) that all functions   $\beta\in B_K(0,R)$ can be approximated by functions of type
$$
g(\cdot)=\sum_{j=1}^p\beta_j K(t_j,\cdot),
$$
where $\beta_j$ are real numbers with $|\beta_j|\leq R$, $p\in{\mathbb N}$, $t_j\in [0,1]$. 

Now, recall that the RKHS functional logistic model corresponding to such function $g$ would be given by expression \eqref{eq:logfin} in terms of $\beta_i$ and $X(t_i)$. Then, assuming the continuity of the trajectories $X(t)$ we can ensure the existence of an approximate maximum likelihood (ML) estimator of  $(\beta^*,\beta_0^*)$ expressed in terms of $(\beta_0,\beta_1,\ldots,\beta_d,t_1,\ldots,t_p)$. 

The effective calculation of such estimator could be done by a sequential ``greedy'' method. The idea is to exchange the direct maximization of the likelihood function by the execution of an iterative algorithm, as follows:

\begin{enumerate}
	\item Let us fix a grid $T_p$ of $p$ equispaced points in $[0,1]$. For each $t$ on the grid, we fit the logistic model of Equation \eqref{eq:logfin} with $p=1$ and $\hat{m}(t) = \bar{X}(t)$. The log-likelihood achieved for this $t$ at the ML estimators $\wh\beta_0$ and $\wh\beta_1$ is stored in $\ell_1(t)$. Then, the first point $\wh t_1$ is fixed as the point at which $\ell_1(t)$ achieves its maximum value.
	\item Once $\wh t_1$ has been selected, for each $t$ in the grid we fit the model
	$$\P(Y=1 | X) = \bigg(1+\exp\Big\{\beta_0 + \beta_1 [X(\wh t_1)-\bar{X}(\hat{t}_1)] + \beta_2 [X(t)-\bar{X}(t)]\Big\} \bigg)^{-1}.$$
	As in the previous step, $\ell_2(t)$ would be the log-likelihood achieved at $\wh \beta_0$, $\wh\beta_1$ and $\wh\beta_2$, and $\hat{t}_2$ is the point at which the maximum of $\ell_2(t)$ is attained.
	\item We proceed in the same way until a suitable number of points $p$ has been selected.

\end{enumerate}
In practical problems, it is also important to determine how many points $p$ one should retain. The common approach is to fix this value $\wh p$ by cross-validation, whenever it is possible. Another reasonable approach is to 
increase the initial value $p$ by repeating the whole procedure with another grid $T_{p+1}$ of $p+1$ equispaced points until the increase achieved in the likelihood function is smaller than a given threshold, in a similar  way  as in \cite{berrendero2019}.

\section*{Acknowledgements}
This work has been partially supported by Spanish Grant PID2019-109387GB-I00.

\bibliography{Refs}{}

\end{document}